\renewcommand{\theta}{\uptheta}
\renewcommand{\iota}{\upiota}
\renewcommand{\alpha}{\upalpha}
\renewcommand{\beta}{\upbeta}
\renewcommand{\gamma}{\upgamma}
\renewcommand{\delta}{\updelta}
\renewcommand{\zeta}{\upzeta}
\renewcommand{\pi}{\uppi\hspace{0.05em}}
\renewcommand{\xi}{\upxi}
\renewcommand{\chi}{\upchi}
\renewcommand{\sigma}{\upsigma}
\renewcommand{\Lambda}{\Uplambda}
\renewcommand{\Gamma}{\Upgamma}
\renewcommand{\phi}{\upphi}
\renewcommand{\nu}{\upnu}
\renewcommand{\tau}{\uptau}
\renewcommand{\mu}{\upmu}
\renewcommand{\eta}{\upeta}
\newtheorem{theorem}{Theorem}[section]
\newtheorem{proposition}[theorem]{Proposition}
\newtheorem{lemma}[theorem]{Lemma}
\theoremstyle{definition}
\theoremstyle{remark}
\newtheorem{remark}[theorem]{Remark}
\newcommand{\dd}{\mathbf{d}}
\newcommand{\ee}{\mathbf{e}}
\DeclareMathOperator{\sst}{-ss}
\DeclareMathOperator{\Hom}{Hom}
\DeclareMathOperator{\Exp}{Exp}
\DeclareMathOperator{\Gl}{GL}
\DeclareMathOperator{\Jac}{Jac}
\DeclareMathOperator{\Tr}{Tr}
\DeclareMathOperator{\pt}{pt}
\DeclareMathOperator{\HO}{\mathbf{H}}
\DeclareMathOperator{\Coha}{\mathcal{H}}
\DeclareMathOperator{\vmult}{\mathbf{m}}
\DeclareMathOperator{\QT}{\mathcal{A}}
\newcommand{\BoA}{\mathbb{A}}
\newcommand{\BoC}{\mathbb{C}}
\newcommand{\BoN}{\mathbb{N}}
\newcommand{\BoP}{\mathbb{P}}
\newcommand{\BoQ}{\mathbb{Q}}
\newcommand{\BoZ}{\mathbb{Z}}
\newcommand{\CM}{\mathcal{M}}
\newcommand{\CN}{\mathcal{N}}
\newcommand{\CS}{\mathcal{S}}
\newcommand{\CZ}{\mathcal{Z}}
\newcommand{\FM}{\mathfrak{M}}
\newcommand{\FS}{\mathfrak{S}}
\title[The generic Markov CoHA is not spherically generated]{The generic Markov CoHA is not spherically generated}
\author{Ben Davison}
\begin{document}
\maketitle
\begin{abstract}
Let $Q$ be the Markov quiver, and let $W$ be an infinitely mutable potential for $Q$.  We calculate some low degree refined BPS invariants for the resulting Jacobi algebra, and use them to show that the critical cohomological Hall algebra $\Coha_{Q,W}$ is not necessarily spherically generated, and is not independent of the choice of infinitely mutable potential $W$.  This leads to a counterexample to a conjecture of Gaiotto, Grygoryev and Li \cite[\S 2.1]{GGL}, but also suggestions for how to modify it.  In the case of generic cubic $W$, we discuss a way to modify the conjecture, by excluding the non-spherical part via the decomposition of $\Coha_{Q,W}$ according to the characters of a discrete symmetry group.
\end{abstract}
\section{Preliminaries}
Given a quiver $Q$ with potential $W\in\BoC Q/[\BoC Q,\BoC Q]$ the Kontsevich--Soibelman cohomological Hall algebra (CoHA) $\Coha_{Q,W}$ is an associative algebra which provides a beautiful link between two worlds (see \cite{KS2} for details).  On the one hand, taking partition functions encoding the dimensions of the graded pieces of $\Coha_{Q,W}$ and factorizing them according to the slopes determined by a given stability condition, we may extract the \textit{refined BPS invariants} of the category of representations for the Jacobi algebra $\Jac(Q,W)$ associated to $Q$ and $W$.  These invariants have their origins in physics, and should be thought of as counting BPS states on the noncommutative Calabi--Yau threefold associated to $\Jac(Q,W)$.

On the other hand, $\Coha_{Q,W}$ is an \textit{algebra} and for suitable choices of $Q$ and $W$, this algebra can be shown to recover and extend various quantum groups, and may be used to prove new results regarding Yangian-type algebras \cite{botta2023okounkov}.

On the algebraic side, the quivers $Q$ with potential for which the algebra $\Coha_{Q,W}$ has been most intensively studied are \textit{symmetric}, meaning that for every pair of vertices $i$ and $j$ in $Q$, there are as many arrows from $i$ to $j$ as there are from $j$ to $i$.  From the point of view of studying BPS invariants, this is quite a restrictive set of quivers: it is the set of quivers for which the BPS invariants are independent of stability conditions, and all wall-crossing phenomena disappear.  Also, from the point of view of cluster algebras, the class of symmetric quivers is an unnatural choice, since in that subject (see \cite{DWZ1} for background) the usual restriction on quivers is that they contain no loops or 2-cycles.  A symmetric quiver satisfying these restrictions has no arrows at all!

This short paper is inspired by a pair of related conjectures in \cite[\S 2.1]{GGL}.  The first states that if $W$ is an infinitely mutable\footnote{This is a kind of non-degeneracy condition arising in cluster algebras.  Rather than spell out the definition, we refer to Lemma \ref{germ_lemma} for examples and non-examples of infinitely mutable potentials.} potential for a quiver $Q$ containing no loops or 2-cycles, then $\Coha_{Q,W}\cong \CS_Q$, where $\CS_Q$ is the spherical subalgebra of the shuffle algebra $\Coha_{Q}$ (see \S \ref{sec_3} for partial definitions).  Note that, while calculations inside $\Coha_{Q,W}$ are made rather difficult by the necessity of working with vanishing cycle cohomology, the algebra $\Coha_Q$ has a very down-to-earth presentation, and it may be studied and understood, along with its subalgebra $\CS_Q$, using elementary calculations and computer algebra packages.  So it would be excellent news to discover that the algebra $\Coha_{Q,W}$, for which it is hard to calculate products, and for which the graded dimensions recover refined BPS invariants, is in fact isomorphic to $\CS_Q$.  The weaker version of this conjecture, also stated in \cite[\S 2.1]{GGL} states that $\Coha_{Q,W}$ is independent of $W$, as long as $W$ is chosen to be infinitely mutable.
\smallbreak 
The \textit{Markov quiver}, for which the definition is recalled in \S \ref{Markov_sec}, has a well-established reputation as a source of interesting properties, examples, and counterexamples in the theory of cluster algebras; see \cite{DWZ1,ANC} and references therein.  The study of the cluster algebra built from this quiver is closely connected to the study of solutions to Markov's equation; we refer to \cite{LLRS} for recent work in this direction, along with further references.  True to its reputation, in this short paper we present counterexamples to the above conjectures (Propositions \ref{main_prop} and \ref{ce_2}), built from the Markov quiver with infinitely mutable potentials.  More positively, we will see that the Markov quiver provides example calculations that suggest how the conjecture might be modified.

\subsection{Setup}
By a quiver $Q$ we mean a finite directed graph.  We set $Q_0$ to be the set of vertices of $Q$, $Q_1$ to be the set of arrows, and $s,t\colon Q_1\rightarrow Q_0$ to be the two morphisms sending an arrow to its source and target, respectively.  Let $\dd\in \BoN^{Q_0}$ be a dimension vector.  We denote by $\FM_{\dd}(Q)$ the stack of $\dd$-dimensional $\BoC Q$-modules.  It has dimension $-\chi_Q(\dd,\dd)$, where $\chi_Q$ is the \textit{Euler form} defined by
\begin{align*}
\chi_Q\colon &\BoN^{Q_0}\times\BoN^{Q_0}\rightarrow \BoZ\\
&(\dd,\ee)\mapsto \sum_{i\in Q_0} \dd_i\ee_i-\sum_{a\in Q_1}\dd_{s(a)}\ee_{t(a)}.
\end{align*}
We can present the stack $\FM_{\dd}(Q)$ as a global quotient stack, as we briefly recall.  We define $\BoA_{\dd}(Q)\coloneqq \prod_{a\in Q_1}\Hom(\BoC^{\dd_{s(a)}},\BoC^{\dd_{t(a)}})$, a vector space parameterising $\dd$-dimensional $\BoC Q$-modules, which we may consider as an affine variety in the obvious way.  This is acted on by the gauge group $\Gl_{\dd}\coloneqq \prod_{i\in Q_0} \Gl_{\dd_i}(\BoC)$ by simultaneous change of basis.  Then $\FM_{\dd}(Q)\cong \BoA_{\dd}(Q)/\Gl_{\dd}$.  

Let $W\in \BoC Q/[\BoC Q,\BoC Q]$ be a linear combination of cyclic paths.  Taking the trace of $W$, considered as an endomorphism of the underlying vector spaces of $\BoC Q$-modules, provides a function $\Tr(W)$ on $\FM_{\dd}(Q)$.

The Kontsevich--Soibelman critical CoHA $\Coha_{Q,W}$ is a $\BoN^{Q_0}$-graded associative algebra, for which the underlying vector space of the $\dd$th graded piece is the vanishing cycle cohomology
\[
\Coha_{Q,W,\dd}=\HO(\FM_{\dd}(Q),\phi_{\Tr(W)}\BoQ[\chi_Q(\dd,\dd)])
\]
and the square brackets denote the cohomological shift.  In words, the $\dd$th graded piece of $\Coha_{Q,W}$ is the hypercohomology of the perverse sheaf of vanishing cycles for the function $\Tr(W)$ on the stack of $\dd$-dimensional $Q$-representations.  

The associative product $\vmult\colon \Coha_{Q,W}^{\otimes 2}\rightarrow \Coha_{Q,W}$ is defined in \cite[\S 7]{KS2}.  We remark that the product respects the cohomological grading on $\Coha_{Q,W}$ if and only if $Q$ is symmetric.  In general the failure of the CoHA multiplication to preserve the cohomological grading is captured by the following formula relating cohomological degrees, where we assume $\alpha\in\Coha_{Q,W,\dd}$ and $\beta\in\Coha_{Q,W,\ee}$, and we use $\circ$ to denote the CoHA multiplication:
\begin{equation}
\label{cojump}
\lvert \alpha\circ \beta\lvert =\lvert\alpha\lvert +\lvert\beta\lvert+\chi_{Q}(\dd,\ee)-\chi_Q(\ee,\dd).
\end{equation}

Fix a quiver $Q$.  We define the ring $\QT_Q$ as follows.  It is a $\BoZ(\!(q^{1/2})\!)$-module, and as a $\BoZ(\!(q^{1/2})\!)$-module it is equal to the set of formal linear combinations $\sum_{\dd\in\BoN^{Q_0}}a_{\dd}(q^{1/2})x^{\dd}$ with each $a_{\dd}(q^{1/2})\in \BoZ(\!(q^{1/2})\!)$.  The multiplication is given by extending the rule $x^{\dd}x^{\ee}=(-q^{1/2})^{\chi_Q(\dd,\ee)/2}x^{\dd+\ee}$ to formal linear combinations.

We consider the partition function in $\QT_Q$
\[
\CZ_{Q,W}(x)\coloneqq\sum_{\dd\in\BoN^{Q_0}} \chi_{q^{1/2}}\left(\Coha_{Q,W,\dd}\right)x^{\dd}
\]
where for a $\BoZ$-graded vector space $V$ we set
\[
\chi_{q^{1/2}}(V)=\sum_{n\in\BoZ}\dim(V^n)(-q^{1/2})^n.
\]
\begin{remark}
\label{naive_remark}
Conceptually, it often makes more sense to replace the above Poincar\'e series with a ``weight'' Poincar\'e series that is sensitive to the mixed Hodge structure on $\Coha_{Q,W}$, and in particular the weight filtration.  See \cite[\S 7]{KS2} for definitions and details of this approach.  Since in this paper we will only be interested in calculating graded dimensions of certain vector spaces, we ignore this alternative, and instead take naive Poincar\'e series throughout.
\end{remark}


Let $\zeta\in\BoQ^{Q_0}$ be a \textit{stability condition}.  We define the \textit{slope} $\mu(\dd)$ of a dimension vector $\dd\in\BoN^{Q_0}\setminus\{0\}$ by setting 
\[
\mu(\dd)=\frac{\dd\cdot \zeta}{\sum_{i\in Q_0}\dd_i}.
\]
We assume that $\zeta$ is \textit{generic}, meaning that if $\dd,\ee\in\BoN^{Q_0}\setminus \{0\}$ have the same slope, then $\chi_Q(\dd,\ee)=\chi_Q(\ee,\dd)$.  Given a slope $\theta\in (-\infty,\infty)$ we define 
\[
\Lambda_{\theta}^{\zeta}\coloneqq \{\dd\in\BoN^{Q_0}\;\lvert \; \dd=0\textrm{ or } \mu(\dd)=\theta\}.
\]
We define $\QT_{Q,\theta}$ to be the subring of $\QT_Q$ spanned by formal $\BoZ(\!(q^{1/2})\!)$-linear combinations of symbols $x^{\dd}$ where $\dd\in\Lambda_{\theta}^{\zeta}$.  By genericity of $\zeta$, for every $\theta$ the ring $\QT_{Q,\theta}$ is commutative.  There is a unique factorization
\begin{equation}
\label{fact_id}
\CZ_{Q,W}(x)=\prod_{\infty\xrightarrow{\theta}-\infty}\CZ^{\zeta}_{Q,W,\theta}(x)
\end{equation}
where $\CZ^{\zeta}_{Q,W,\theta}(x)\in \QT_{Q,\theta}$.  By the cohomological wall crossing isomorphism \cite[Thm.B]{QEAs} there are equalities
\begin{equation}
\label{WCF}
\CZ^{\zeta}_{Q,W,\theta}(x)=\sum_{\dd\in\Lambda_{\theta}^{\zeta}}\chi_{q^{1/2}}(\HO(\FM^{\zeta\sst}_{\dd}(Q),\phi_{\Tr(W)}\BoQ[-\chi_Q(\dd,\dd)]))x^{\dd}
\end{equation}
where $\FM^{\zeta\sst}_{\dd}(Q)\subset \FM_{\dd}(Q)$ is the substack of $\zeta$-semistable $Q$-representations.

We may repackage the functions $\CZ^{\zeta}_{Q,W,\theta}(x)$ in terms of \textit{refined BPS invariants}, which are Laurent polynomials $\Omega_{Q,W,\dd}^{\zeta}\in\BoZ[q^{\pm 1/2}]$ defined via the equality
\begin{equation}
\label{ICF}
\CZ^{\zeta}_{Q,W,\theta}(x)=\Exp\left(\sum_{\dd\in\Lambda_{\theta}^{\zeta}\setminus \{0\}} \Omega^{\zeta}_{Q,W,\dd}x^{\dd}(-q^{1/2})(1-q)^{-1}\right).
\end{equation}
Here $\Exp$ is the plethystic exponential, defined by setting
\[
\Exp\left(\sum_{\substack{\dd\in\Lambda_{\theta}^{\zeta}\setminus \{0\}\\n\in\BoZ}}a_{\dd,n}x^{\dd}q^{n/2}\right)\coloneqq \prod_{\substack{\dd\in\Lambda_{\theta}^{\zeta}\setminus \{0\}\\n\in\BoZ}}(1-q^{n/2}x^{\dd})^{-a_{\dd,n}}
\]
whenever the right hand makes sense.  The fact that the formal power series $\Omega_{Q,W,\dd}^{\zeta}\in\BoZ(\!(q^{1/2})\!)$ defined this way are actually Laurent polynomials is a consequence of the cohomological integrality theorem \cite[Thm.A]{QEAs}.
\begin{remark}
The polynomials $\Omega^{\zeta}_{Q,W,\dd}$ can be realised by taking the Poincar\'e polynomials of BPS cohomology, introduced in \cite{QEAs}.  If we had defined the partition functions $\CZ^{\zeta}_{Q,W,\theta}(x)$ using weight series instead, we would take the weight polynomials of BPS cohomology to recover the corresponding refined BPS invariants.  Since in this paper we are principally interested in the dimensions of vector spaces, it is most natural to consider naive Poincar\'e series.
\end{remark}
\subsection{The conjectures}
Fixing a quiver $Q$, it is very interesting to study the dependence of $\CZ_{Q,W}(x)$ on $W$.  It is conjectured in \cite[\S 2.1]{GGL} that as long as the quiver with potential $(Q,W)$ is \textit{infinitely mutable}, the partition function $\CZ_{Q,W}(x)$ does not depend on the choice of $W$.  This is equivalent to the statement that after fixing a stability condition $\zeta$, the BPS invariants $\Omega_{Q,W,\dd}^{\zeta}$ do not depend on $W$.  Being infinitely mutable is a certain non-degeneracy condition on quivers with potentials that is important in the categorification of cluster algebras via Ginzburg's differential graded algebras (see e.g. \cite{ginz,MR2681708, DWZ1} for definitions, motivation, and background).  It is, first of all, assumed that $Q$ does not contain loops and 2-cycles.  Then mutation at a given vertex $i$ produces a new quiver with potential $\mu_i(Q,W)$.  Infinite mutability is the condition that this mutated quiver also does not contain 2-cycles, and that this remains the case after iterated mutation at any sequence of vertices.
\section{The Markov quiver}

\label{Markov_sec}
\subsection{Potentials for the Markov quiver}
For the rest of the paper we fix $Q$ to be the Markov quiver.  Precisely, we set $Q_0=\{1,2,3\}$ and $Q_1=\{a_1,a_2,b_1,b_2,c_1,c_2\}$, with the orientations of the arrows as in the following diagram
\[
\xymatrix{
&1\ar[dr]^{a_1,a_2}\\3\ar[ur]^{c_1,c_2}&&2.\ar[ll]_{b_1,b_2}
}
\]
Let $W\in \BoC Q/\BoC Q,\BoC Q]$ be a potential.  We grade $\BoC Q$ by path length. 
 \begin{lemma}\cite[Chapter 14, Example 4.5]{MR1264417}\label{germ_lemma}
After applying a graded linear isomorphism $\Phi\colon \BoC Q\rightarrow \BoC Q$, i.e. an isomorphism taking arrows to linear combinations of arrows, we may write $W$ in one of the following five forms
\begin{enumerate}
\item $W=W_{\geq 6}$ \label{vbad_case}
\item $W= c_1b_1a_1+W_{\geq 6}$\label{bad_case}
\item $W=c_1b_1a_1+c_1b_2a_2 +W_{\geq 6}$ \label{bad_case2}
\item $W=c_1b_1a_2+c_1b_2a_1+c_2b_1a_1+W_{\geq 6}$\label{marginal_case}
\item $W=c_1b_1a_1+c_2b_2a_2+W_{\geq 6}$, \label{generic_case}
\end{enumerate}
where $W_{\geq 6}$ is the sum of all of the homogeneous pieces of $W$ of degree at least $6$, i.e. a linear combination of cyclic paths of length at least $6$.   

Moreover, case \eqref{generic_case} is \emph{generic}, in the following sense: the type of a potential $W$ under graded linear isomorphisms is determined by the cubic part of $W$, and a generic homogeneous cubic potential $W$ can be transformed to the form $W=c_1b_1a_1+c_2b_2a_2$. 
\end{lemma}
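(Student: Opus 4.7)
The plan is to reduce the classification to a classical question about $2\times 2\times 2$ tensors under the action of $\GL_2\times\GL_2\times\GL_2$, using the very restrictive structure of the cyclic paths in $Q$.

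First I would observe that every cyclic path in the Markov quiver has length a multiple of $3$, since the vertex sequence of any path must cycle through $1\to 2\to 3\to 1$. Consequently, every potential decomposes uniquely as $W=W_3+W_{\geq 6}$, where $W_3$ is the cubic (length $3$) part and $W_{\geq 6}$ contains all cyclic paths of length at least $6$.

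Next I would describe the group acting. A graded linear isomorphism $\Phi\colon\BoC Q\to\BoC Q$ preserves path length, and is determined by its action on arrows; since two arrows can be mixed only if they have the same source and target, $\Phi$ is determined by a triple $(g_a,g_b,g_c)\in\GL_2\times\GL_2\times\GL_2$ acting on the spans $V_a=\langle a_1,a_2\rangle$, $V_b=\langle b_1,b_2\rangle$, $V_c=\langle c_1,c_2\rangle$. The induced action on $W_3$, viewed as an element of $V_c\otimes V_b\otimes V_a\cong \BoC^2\otimes\BoC^2\otimes\BoC^2$, is the standard diagonal one. Since the group acts separately on each homogeneous component, the classification reduces to classifying $\GL_2^3$-orbits on $\BoC^2\otimes\BoC^2\otimes\BoC^2$; the higher-degree piece $W_{\geq 6}$ is simply carried along and absorbed into the $W_{\geq 6}$ term of the normal form.

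Third, I would classify $\GL_2^3$-orbits on $(\BoC^2)^{\otimes 3}$ by slicing. Write $W_3=c_1\otimes M_1+c_2\otimes M_2$ for $2\times 2$ matrices $M_1,M_2$ acting on $V_b\otimes V_a$. The $\GL_2\times\GL_2$-action on $V_b\times V_a$ sends $(M_1,M_2)$ to $(g_bM_1g_a^T,g_bM_2g_a^T)$, which is precisely the classical equivalence of matrix pencils, while the $\GL_2$-action on $V_c$ reparametrizes the pencil $sM_1+tM_2$. Using the Kronecker normal form for $2\times 2$ pencils, case analysis by the zero-locus of the binary form $\det(sM_1+tM_2)\in\BoC[s,t]_2$ yields:
\begin{enumerate}
\item $M_1=M_2=0$, corresponding to $W_3=0$;
\item rank identically $\leq 1$ pencil with one nonzero matrix of rank $1$, giving $W_3=c_1b_1a_1$;
\item $\det(sM_1+tM_2)$ is a nonzero square coming from a rank $2$ matrix, giving $W_3=c_1b_1a_1+c_1b_2a_2$;
\item $\det(sM_1+tM_2)$ is a nonzero square but no matrix in the pencil has rank $2$ at that double point (the Jordan-type case), giving $W_3=c_1b_1a_2+c_1b_2a_1+c_2b_1a_1$;
\item $\det(sM_1+tM_2)$ has two distinct roots, giving $W_3=c_1b_1a_1+c_2b_2a_2$.
\end{enumerate}
Matching each normal form for the pencil with its expression as a cubic cyclic word is a direct computation.

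Finally, for genericity I would observe that the locus where the binary form $\det(sM_1+tM_2)$ has two distinct roots is Zariski-open and nonempty in the $8$-dimensional space $V_c\otimes V_b\otimes V_a$. Equivalently, the stabilizer of $e_1^{\otimes 3}+e_2^{\otimes 3}$ in $\GL_2^3$ is $4$-dimensional (the subtorus $\{(g_a,g_b,g_c)\text{ diagonal}:\lambda_a^i\lambda_b^i\lambda_c^i=1\text{ for }i=1,2\}$ extended by the swap $e_1\leftrightarrow e_2$), so its orbit has dimension $12-4=8$ and is therefore open.

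The main obstacle is the third step: extracting five normal forms from the Kronecker classification and matching them to cyclic words of the quiver. Everything else is bookkeeping, and no further analysis of $W_{\geq 6}$ is required since the group action preserves the length filtration.
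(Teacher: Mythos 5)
The paper offers no proof of this lemma: it is quoted from the cited source (GKZ, Ch.~14, Ex.~4.5), and your reduction to the classification of $\Gl_2\times\Gl_2\times\Gl_2$-orbits on $\BoC^2\otimes\BoC^2\otimes\BoC^2$ --- every cycle in the Markov quiver has length divisible by $3$, graded automorphisms act factor-by-factor, and $W_{\geq 6}$ is simply carried along --- is exactly the right framing and matches that source. Your genericity argument (the stabilizer of $e_1^{\otimes 3}+e_2^{\otimes 3}$ is $4$-dimensional, so its orbit is $8$-dimensional and hence open in the $8$-dimensional tensor space) is also correct.

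However, the case analysis in your third step has a genuine gap. Under $\Gl_2^3$ alone there are \emph{seven} orbits, not five: besides $0$, the rank-one orbit, the W-orbit and the generic (GHZ) orbit, there are \emph{three} distinct biseparable orbits $u\otimes M$ with $\rk M=2$, one for each choice of separated tensor factor. Your slicing $W_3=c_1\otimes M_1+c_2\otimes M_2$ only produces the $c$-separated one, $c_1b_1a_1+c_1b_2a_2$. The $a$- and $b$-separated tensors, e.g.\ $c_1b_1a_1+c_2b_1a_2$, fall into the subcase your list omits entirely, namely a two-dimensional pencil with $\det(sM_1+tM_2)\equiv 0$; they are not in the $\Gl_2^3$-orbit of form (3), nor can they be absorbed into your case (2) (their multilinear rank is $(2,1,2)$, not $(1,1,1)$). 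They become equivalent to form (3) only after applying the cyclic automorphism of the Markov quiver, which your argument never invokes; you need that extra step to collapse seven orbits to the five forms of the lemma. Separately, your criterion distinguishing cases (3) and (4) --- ``a nonzero square coming from a rank $2$ matrix'' versus ``no matrix has rank $2$ at the double point'' --- cannot be right as written, since at any root of $\det(sM_1+tM_2)$ the corresponding matrix has rank at most $1$ by definition. The correct invariant is whether the matrix at the double root has rank $0$ or rank $1$, equivalently whether the pencil spans a one- or a two-dimensional space of matrices.
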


\begin{proposition}
\label{IM_pots}
The potential $W$ is infinitely mutable if and only if it is of one of the forms given in cases \eqref{marginal_case} and \eqref{generic_case} above.
\end{proposition}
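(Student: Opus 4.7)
The plan is to work out the effect of a single DWZ mutation on each of the five normal forms of Lemma \ref{germ_lemma}, exploiting the fact that the quadratic part of the unreduced mutated potential $\tilde\mu_k W$ comes entirely from the cubic part of $W$. The reason is that every cyclic path in the Markov quiver has length a multiple of $3$, visiting each vertex the same number of times; mutation at vertex $k$ replaces each visit to $k$ (an adjacent pair of an incoming and an outgoing arrow) by one new arrow, so a cyclic path of length $3m$ becomes of length $2m$. In particular, only the length-$3$ terms of $W$ contribute to the quadratic part of $\tilde\mu_k W$, and the tail $W_{\geq 6}$ produces only corrections of degree $\geq 4$ that play no role in the DWZ reduction of the $2$-cycles.

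For the ``only if'' direction, I would produce for each of cases \eqref{vbad_case}, \eqref{bad_case}, \eqref{bad_case2} an explicit mutation sequence ending in a quiver with $2$-cycles. In case \eqref{vbad_case}, mutation at vertex $1$ introduces four new arrows $3\to 2$ against the original two arrows $2\to 3$, and with no quadratic terms in $\tilde\mu_1 W$ no cancellation is possible. In case \eqref{bad_case}, the single cubic term $c_1 b_1 a_1$ generates the rank-$1$ quadratic $[a_1 c_1] b_1$, after whose cancellation three arrows $3\to 2$ remain against one arrow $2\to 3$. In case \eqref{bad_case2}, mutation at vertex $1$ does succeed in removing all $2$-cycles, but a direct calculation of the reduced cubic gives a potential of the form $C_2(A_1D_1 + A_2D_2)$ in the mutated quiver (one slice of the cubic tensor identically zero, i.e. again case \eqref{bad_case2} up to relabeling); a subsequent mutation at vertex $2$ then produces the rank-$1$ quadratic form $([A_1D_1]+[A_2D_2])C_2$, whose cancellation leaves three arrows $3\to 1$ against one arrow $1\to 3$, creating $2$-cycles.

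For the ``if'' direction, I would verify that the $\mathrm{GL}_2^3$-orbit of the cubic part of $W$ is preserved by mutation in cases \eqref{marginal_case} and \eqref{generic_case}, from which infinite mutability follows by induction (and by the $\mathbb{Z}/3$ symmetry of the Markov quiver, it suffices to mutate at vertex $1$). In case \eqref{generic_case}, the quadratic form $[a_1c_1]b_1+[a_2c_2]b_2$ splits as two $1\times 1$ blocks and the surviving cubic $c_2^* a_1^*[a_1c_2]+c_1^* a_2^*[a_2c_1]$ is again of GHZ type after a permutation. In case \eqref{marginal_case}, the quadratic form $[a_1c_1]b_2+([a_2c_1]+[a_1c_2])b_1$ still has rank $2$ but pairs $b_1$ with a non-coordinate element of the $3\to 2$ arrow space; after reducing via the basis $D_1 = [a_2c_1]-[a_1c_2]$, $D_2=[a_2c_2]$ one obtains an explicit cubic that a further $\mathrm{GL}_2^3$-transformation brings back to the standard W-shape of \eqref{marginal_case}.

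The main obstacle will be the reduction calculation in case \eqref{marginal_case}: the off-diagonal rank-$2$ form forces a non-coordinate change of basis in the $3\to 2$ arrows, and the resulting cubic does not visibly resemble the normal form $c_1b_1a_2+c_1b_2a_1+c_2b_1a_1$. One must either write down an explicit element of $\mathrm{GL}_2^3$ that realizes the required change of basis, or verify orbit membership via the two discrete invariants distinguishing the five orbits (tensor rank together with the vanishing or non-vanishing of the Cayley hyperdeterminant). Either route is routine but somewhat tedious, and the contributions of $W_{\geq 6}$ along the way can be absorbed into a formal new tail $W'_{\geq 6}$ without ever interfering with the cubic orbit analysis.
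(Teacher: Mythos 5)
Your proposal follows essentially the same route as the paper: a case-by-case analysis of DWZ mutation on the normal forms of Lemma \ref{germ_lemma}, using that only the cubic part of $W$ contributes to the quadratic part of the mutated potential, so that one can exhibit surviving $2$-cycles in cases \eqref{vbad_case}--\eqref{bad_case2} and show the normal form is preserved (hence induct, invoking the $\BoZ/3$ symmetry) in cases \eqref{marginal_case} and \eqref{generic_case}. The paper streamlines the negative direction by noting that a single mutation at vertex $2$ already leaves $2$-cycles in all of cases \eqref{vbad_case}--\eqref{bad_case2} (so your two-step argument for case \eqref{bad_case2} is unnecessary, though not wrong), cites \cite{DWZ1} for case \eqref{generic_case}, and carries out explicitly the case-\eqref{marginal_case} reduction that you defer, via the substitution $u=[b_1a_2]+[b_2a_1]$, $v=[b_1a_2]-[b_2a_1]$.
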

\begin{proof}
In the first three cases, a single mutation at vertex $2$ produces a quiver with potential that contains at least two $2$-cycles; see \cite{DWZ1} for the definition of mutation for quivers with potentials.  So we just need to show that in the remaining two cases, the quiver with potential is infinitely mutable.  For case \eqref{generic_case}, this is \cite[Example.8.6]{DWZ1}.  The argument for case \eqref{marginal_case} is the same as the argument for \eqref{generic_case}; we write it for completeness.

We describe the mutation of $(Q,W)$ at vertex $2$; this will suffice, since the quiver with potential is invariant under rotational symmetry by the group $\BoZ/3\BoZ$.  The new quiver has arrows $a_1^*,a_2^*$ from $2$ to $1$, arrows $b_1^*,b_2^*$ from $3$ to $2$, arrows $c_1,c_2$ from $3$ to $1$, and four arrows $[b_ia_j]$ from $1$ to $3$, with $i,j\in\{1,2\}$.  The new potential, before cancelling off quadratic terms, is of the form
\[
W'=c_1[b_1a_2]+c_1[b_2a_1]+c_2[b_1a_1]+\sum_{i,j\in\{1,2\}}[b_ia_j]a_j^*b_i^*+W'_{\geq 4}.
\]
Writing $u=[b_1a_2]+[b_2a_1]$ and $v=[b_1a_2]-[b_2a_1]$ we find
\[
W'=c_1u+c_2[b_1a_1]+[b_1a_1]a_1^*b_1^*+[b_2a_2]a_2^*b_2^*+(u+v)a_2^*b_1^*/2+(u-v)a_1^*b_2^*/2+W'_{\geq 4}
\]
Substituting $c_1\mapsto c_1-a_2^*b_1^*/2-a_1^*b_2^*/2+\ldots$ and $c_2\mapsto c_2-a_1^*b_1^*+\ldots$ and rescaling $v$ and $a_1^*$ this potential transforms to
\[
W''=c_1u+c_2[b_1a_1]+[b_2a_2]a_2^*b_2^*+va_2^*b_1^*+va_1^*b_2^*+W''_{\geq 6}
\]
where $W''_{\geq 6}$ does not contain the arrows $c_1,u,c_2,[b_1a_1]$.  Removing the quadratic terms and the 2-cycles $c_1u$ and $c_2[b_1a_1]$ we find that the mutated quiver with potential contains no $2$-cycles, is isomorphic to the Markov quiver, and the new potential is again of the form \eqref{marginal_case}.
\end{proof}

\subsection{BPS invariants for generic $W$ and small dimension vectors}
\label{low_BPS}
For the rest of the paper we fix a stability condition $\zeta\in \BoQ^{Q_0}$ by setting $\zeta_1=1$, $0<\zeta_2=\epsilon \ll 1$ and $\zeta_3=-1$.  

Next, we calculate some low-degree refined BPS invariants for potentials of generic form (\eqref{generic_case} above).  Setting $\dd=d\delta_i$ to be the dimension vector that is zero everywhere apart from $i\in Q_0$ and for which the entry at $i$ is $d$, we find $\FM^{\zeta\sst}_{\dd}(Q)\cong \pt/\Gl_d(\BoC)$ and the function $\Tr(W)$ is zero on this stack.  So if $\theta=1,\epsilon, -1$, we have the standard calculation
\begin{align*}
\CZ^{\zeta}_{Q,W,\theta}(x)=&\sum_{d\geq 0}\chi_{q^{1/2}}(\HO(\pt/\Gl_d(\BoC),\BoQ[-d^2]))x^{d\delta_i}\\
=&\Exp\left(x^{\delta_i}\frac{-q^{1/2}}{(1-q)}\right).
\end{align*}
In particular, \[
\Omega^{\zeta}_{Q,W,\delta_i}=1\]
for $i=1,2,3$.

Now let $\dd=(1,1,0)$.  A $\dd$-dimensional $Q$-representation is given by two linear maps $\rho(a_1)\colon \BoC\rightarrow \BoC$ and $\rho(a_2)\colon \BoC\rightarrow \BoC$, satisfying the condition that not both of them are the zero map.  We thus see that $\FM^{\zeta\sst}_{\dd}(Q)\cong \BoP^1/\BoC^*$.  Again, the function $\Tr(W)$ is zero on this stack, and we have the isomorphism of sheaves $\phi_{\Tr(W)}\BoQ[-\chi_Q(\dd,\dd)]\cong \BoQ$.  Comparing \eqref{WCF} and \eqref{ICF} we deduce
\[
\Omega^{\zeta}_{Q,W,(1,1,0)}=-q^{-1/2}-q^{1/2},
\]
which is the normalized Poincar\'e polynomial of $\BoP^1$.

On the other hand, there are no $\zeta$-semistable $Q$-representations of dimension vector $(1,0,1)$; such a module $\rho$ would have a destabilising submodule of dimension vector $(1,0,0)$.  So it follows, again from \eqref{WCF}, that
\[
\Omega^{\zeta}_{Q,W,(1,0,1)}=0.
\]
\setcounter{tocdepth}{1}
\begin{proposition}
\label{rbpscalc}
Continue to assume that $W$ is generic, i.e. that we can write $W=c_1b_1a_1+c_2b_2a_2+W_{\geq 6}$.  Then \begin{itemize}
\item
$\Omega^{\zeta}_{Q,W,(1,1,1)}=2+e(q^{1/2})$, where $e(q^{1/2})\in\BoN[(-q^{1/2})^{\pm 1}]$ is a Laurent polynomial in $q^{1/2}$, with the coefficient of $q^{n/2}$ positive or negative depending on whether $n$ is even or odd.  
\item
If we set $W=c_1b_1a_1+c_2b_2a_2$ then $\Omega^{\zeta}_{Q,W,(1,1,1)}=2$.
\end{itemize}
\end{proposition}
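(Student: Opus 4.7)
My starting point is to describe the moduli stack $\FM^{\zeta\sst}_{(1,1,1)}(Q)$ concretely. Analysing the sub-dimension vectors of $(1,1,1)$ with slope strictly greater than $\mu((1,1,1))=\epsilon/3$, I find these are $(1,0,0)$, $(0,1,0)$ and $(1,1,0)$, and the associated destabilising subrepresentations exist if and only if $a_1=a_2=0$ or $b_1=b_2=0$. Hence the semistable locus $U\subset\BoA^6$ is the open subset cut out by $(a_1,a_2)\neq 0$ and $(b_1,b_2)\neq 0$, and $\FM^{\zeta\sst}_{(1,1,1)}(Q)=[U/\Gl_{(1,1,1)}]$ is a smooth stack of dimension $3=-\chi_Q((1,1,1),(1,1,1))$.

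For the second bullet, where $W=c_1b_1a_1+c_2b_2a_2$, I would exploit that $\Tr(W)$ is linear in $c_1$ and $c_2$ and apply dimensional reduction to identify $\HO(\FM^{\zeta\sst}_{(1,1,1)}(Q),\phi_{\Tr(W)}\BoQ[3])$ with the appropriately shifted constant-sheaf cohomology of the stacky vanishing locus of $(\partial \Tr(W)/\partial c_i)_{i=1,2}$. This locus corresponds to the subvariety $Z:=\{b_1a_1=b_2a_2=0\}\subset U$, which by case analysis decomposes as the disjoint union of two copies of $(\BoC^*)^2$, arising from the cases $(a_1\neq 0,\;a_2=b_1=0,\;b_2\neq 0)$ and its swap. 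Each copy is a free $\Gl_{(1,1,1)}$-orbit modulo the diagonal centre, so after gauge quotient we obtain $B\BoC^*\sqcup B\BoC^*$. Plugging $\chi_{q^{1/2}}\HO(B\BoC^*,\BoQ)=(1-q)^{-1}$ into \eqref{WCF} and \eqref{ICF} then yields $\Omega^{\zeta}_{Q,W,(1,1,1)}=2$.

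For the first bullet, the aim is to show the two cubic critical components persist in the deformed problem. Since the Markov quiver is cyclically oriented between its three vertices, every cyclic path of length at least $6$ uses at least two $c_i$-arrows, every non-cyclic path of length at least $3$ uses at least one $c_i$-arrow, and the only $c$-free paths in $Q$ have length at most $2$. Combining these observations, both $W_{\geq 6}$ and all its first partial derivatives vanish at any point with $c_1=c_2=0$; the only second partial derivatives of $W_{\geq 6}$ which may be nonzero there are entries in the $(c_1,c_2)$ block of the Hessian, arising from length-$6$ cyclic words of the form $a_1b_2c_{i_1}a_1b_2c_{i_2}$. A direct $4\times 4$ determinant calculation then shows that the full Hessian of $\Tr(W)$ at each cubic critical point is still non-degenerate in the four normal directions $(a_2,b_1,c_1,c_2)$. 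So by a Morse-Bott lemma the two cubic $(\BoC^*)^2$-orbits remain isolated components of the critical locus of $\Tr(W)$, and each still contributes a copy of $\HO(B\BoC^*,\BoQ)$ to the BPS cohomology, giving a summand of $2$ to $\Omega^{\zeta}_{Q,W,(1,1,1)}$. Any remaining contribution $e(q^{1/2})$ comes from critical components on which some $c_i$ is nonzero, and by the integrality theorem \cite[Thm.A]{QEAs} the refined BPS invariants are Poincaré polynomials of BPS cohomology, which is a mixed Hodge module, so $e(q^{1/2})\in\BoN[(-q^{1/2})^{\pm 1}]$ with the sign pattern described.

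The main obstacle will be rigorously decoupling the cubic contribution of $2$ from the remainder $e(q^{1/2})$. I expect this to follow from the decomposition of the critical locus of $\Tr(W)$ into the isolated pair of cubic $B\BoC^*$'s supported in $\{c_1=c_2=0\}$ and a complementary closed piece supported in its complement, together with the induced direct-sum decomposition of the vanishing cycle perverse sheaf.
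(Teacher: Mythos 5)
Your proposal is correct, and it reaches both bullets by the same overall strategy as the paper: exhibit the two critical points $\alpha,\beta$ supported on $\{c_1=c_2=0\}$, show they survive the deformation by $W_{\geq 6}$ as isolated nondegenerate critical points, and define $e(q^{1/2})$ as the Poincar\'e series of the complementary summand, with the sign pattern coming from $e$ being $\chi_{q^{1/2}}$ of a graded vector space and Laurent-polynomiality from \cite[Thm.A]{QEAs}. The local computations, however, are carried out differently. For the undeformed cubic the paper covers the coarse moduli space $\CM$ by the four charts $\CM_{i,j}\cong\BoA^4$ and computes $\phi_{\Tr(W)}\BoQ[4]$ chart by chart via Thom--Sebastiani and the identity $\phi_{xy}\BoQ_{\BoA^2}[2]\cong\BoQ_0$, whereas you integrate out the $c$-variables by dimensional reduction and identify the locus $\{b_1a_1=b_2a_2=0\}$ inside the semistable locus with two free orbits; both routes give $\BoQ_{\alpha}\oplus\BoQ_{\beta}$, and yours has the merit of matching the method the paper itself uses for $W_{\mathtt{marg}}$ in Lemma \ref{marg_calc} (note that, as there, the isomorphism of \cite[Appendix A]{Da13} is stated for compactly supported cohomology, so a Verdier duality step should be made explicit). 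For the deformed potential the paper simply asserts that a formal change of coordinates returns $\Tr(W)$ to the quadratic normal form near $\alpha$ and $\beta$; your counting argument --- every cycle of length at least $6$ contains at least two $c$-arrows, so $W_{\geq 6}$ only perturbs the $(c_1,c_2)$-block of the Hessian in the normal directions $(a_2,b_1,c_1,c_2)$, and the perturbed $4\times 4$ determinant is still $1$ --- is a more explicit and equally valid justification of the same fact. Once $\alpha$ and $\beta$ are isolated in the critical locus, the direct-sum decomposition of the vanishing-cycle sheaf that you invoke at the end is exactly the paper's, so the decoupling you flag as the main obstacle is already handled by your Hessian computation.
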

\begin{proof}
Let $\rho$ be a $\zeta$-semistable $(1,1,1)$-dimensional $Q$-representation.  Fixing identifications between the vector spaces that $\rho$ assigns to the three vertices and the one-dimensional vector space $\BoC$, $\rho$ is determined by six linear maps $\rho(a_1),\rho(a_2),\ldots$, which we may identify with numbers in $\BoC$.  We abuse notation by denoting these numbers $a_1,\ldots,c_2$.  Then stability for $\rho$ is equivalent to the two conditions
\begin{itemize}
\item
At least one of $a_1,a_2$ are nonzero.
\item
At least one of $b_1,b_2$ are nonzero.
\end{itemize}
Let $\CM=\CM^{\zeta\sst}_{(1,1,1)}(Q)$ be the coarse moduli space; since $(1,1,1)$ is indivisible, this is a fine moduli space, and moreover we have $\FM^{\zeta\sst}_{(1,1,1)}(Q)\cong \CM/\BoC^*$.

We cover $\CM$ by the four charts $\CM_{i,j}$, for $i,j\in\{1,2\}$, where $\CM_{i,j}$ is defined to be the subvariety corresponding to $Q$-representations for which $a_i\neq 0$ and $b_j\neq 0$.  Then each of $\CM_{i,j}$ is isomorphic to $\BoA^4$; up to gauge equivalence $a_i=1,b_j=1$, and then the remaining 4 arrows provide the four coordinates of affine 4-space.  
We prove the final part of the proposition first, so for now we set $W=c_1b_1a_1+c_2b_2a_2$.  I claim that
\[
\phi_{\Tr(W)}\BoQ[4]\vert_{\CM_{i,j}}=\begin{cases} 0& \textrm{if }i=j\\
\BoQ_{0} &\textrm{if }i\neq j.
\end{cases}
\]
The first case ($i=j$) is easy: in local coordinates we write
\[
\Tr(W)=c_i+c_kb_ka_{k}
\]
with $k\neq i$.  In particular, the critical locus of this function is empty, and since $\phi_{\Tr(W)}\BoQ[3]$ is supported on this locus, the first part of the claim follows.  

For the second case ($i\neq j$), we have instead
\begin{equation}
\label{spc}
\Tr(W)=c_ja_i+c_ib_j.
\end{equation}
By the Thom--Sebastiani isomorphism \cite{Ma01}, we find 
\[
\phi_{\Tr(W)}\BoQ_{\CM_{i,j}}[4]\cong \phi_{c_ja_i}\BoQ_{\BoA^2}[2]\boxtimes \phi_{c_ia_j}\BoQ_{\BoA^2}[2]\cong \BoQ_0\boxtimes \BoQ_0.
\]
We have used here the standard calculation $\phi_{xy}\BoQ_{\BoA^2}[2]\cong \BoQ_0$, the constant sheaf supported on the origin $0\in\BoA^2$.

Let $\alpha\in\CM$ be the point corresponding to the module for which $a_1$ and $b_2$ act via isomorphisms, and all other arrows act via the zero map.  Let  $\beta\in\CM$ be the point corresponding to the module for which $a_2$ and $b_1$ act via isomorphisms, and all other arrows act via the zero map.  We depict them as follows
\begin{equation}
\label{alpha_beta}
\xymatrix@R=1em{\alpha\colon &1\ar[r]^{a_1}&2\ar[r]^{b_2}&3\\
\beta\colon &1\ar[r]^{a_2}&2\ar[r]^{b_1}&3.
}
\end{equation}
The claim tells us that $\phi_{\Tr(W)}\BoQ_{\CM}[4]\cong \BoQ_{\alpha}\oplus\BoQ_{\beta}$.

Then we have
\begin{align*}
\HO(\FM^{\zeta\sst}_{(1,1,1)}(Q),\phi_{\Tr(W)}\BoQ[3])\cong&\HO(\CM^{\zeta\sst}_{(1,1,1)}(Q),\phi_{\Tr(W)}\BoQ[4])\otimes \HO(\pt/\BoC^*,\BoQ[-1])\\
\cong&\HO(\{\alpha,\beta\},\BoQ)\otimes \HO(\pt/\BoC^*,\BoQ[-1])
\end{align*}
and so
\begin{align*}
\chi_{q^{1/2}}(\HO(\FM^{\zeta\sst}_{(1,1,1)}(Q),\phi_{\Tr(W)}\BoQ[3]))x^{(1,1,1)}=2\cdot \frac{-q^{1/2}}{1-q}.
\end{align*}
Now we consider the case of general $W=c_1b_1a_1+c_2b_2a_2+W_{\geq 6}$.  In this case we find that the scheme-theoretic critical locus of $\Tr(W)$ contains the points $\alpha$ and $\beta$ as reduced connected components, since after a formal change of coordinates we may transform $\Tr(W)=c_ja_i+a_ib_j+(\textrm{higher order terms})$ back into the form \eqref{spc}.  It follows that the restriction of $\phi_{\Tr(W)}\BoQ_{\CM}[4]$ to a small analytic neighbourhood of $\alpha$ is $\BoQ_{\alpha}$, and its restriction to a small analytic neighbourhood of $\beta$ is $\BoQ_{\beta}$.  Set $\CN=\CM\setminus\{\alpha,\beta\}$.  Passing to derived global sections, we find that there is a direct sum decomposition
\[
\HO(\CM^{\zeta\sst}_{(1,1,1)}(Q),\phi_{\Tr(W)}\BoQ[4])\cong \HO(\{\alpha,\beta\},\BoQ)\oplus \HO(\CN,\phi_{\Tr(W)}\BoQ[4]).
\]
Then we set $e(q^{1/2})=\chi_{q^{1/2}}(\HO(\CN,\phi_{\Tr(W)}\BoQ[4]))$.
\end{proof}
\section{Counterexamples}
\label{sec_3}
\subsection{Spherical (non) generation}
We refer to \cite[\S 2]{KS2} for the definition of the shuffle algebra $\Coha_Q$ associated to an arbitrary quiver.  It is shown there that this shuffle algebra is isomorphic to the cohomological Hall algebra $\Coha_{Q,W}$ with $W=0$.  At the level of underlying vector spaces, we have 
\[
\Coha_{Q,\dd}\cong\BoQ[z_{1,1},\ldots,z_{1,\dd_1},z_{2,1},\ldots,z_{l,1},\ldots,z_{l,\dd_l}]^{\FS_{\dd}}
\]
where the symmetric group $\FS_{\dd}=\prod_{i\in Q_0} \FS_{\dd_i}$ acts by permuting all variables while preserving the first of their two subscripts.  The cohomological grading of a homogeneous polynomial $p(z)$ is given by setting 
\[
\lvert p(z)\lvert=2\deg(p(z))+\chi_Q(\dd,\dd).
\]
We continue to denote by $Q$ the Markov quiver from \S \ref{Markov_sec}.  We define the \textit{spherical subalgebra} $\CS_Q\subset \Coha_Q$ to be the subalgebra generated by all the subspaces $\Coha_{Q,\delta_i}\subset \Coha_Q$ for $i\in Q_0$.  More generally, we define the spherical subalgebra $\CS_{Q,W}\subset \Coha_{Q,W}$ to be the subalgebra generated by the subspaces $\Coha_{Q,W,\delta_i}$ for $i\in Q_0$, and we say that $\Coha_{Q,W}$ is spherically generated if it is equal to its spherical subalgebra.

By the formula for the shuffle product in \cite[\S 2]{KS2}, $\CS_{Q,(1,1,1)}$ is spanned by elements of the form
\begin{equation}
\label{spherical_span}
(z_{1}-z_{3})^2p,\quad (z_{3}-z_{2})^2r,\quad (z_{2}-z_{1})^2s
\end{equation}
where we have abbreviated $z_i=z_{i,1}$ for $i=1,2,3$ and $p,r,s\in\BoZ[z_1,z_2,z_3]$.  In particular, we find that 
\[
\CS^n_{Q,(1,1,1)}\cong \begin{cases} 0 &\textrm{if } n<1\\
\BoQ\!\cdot\! (z_{1}-z_{3})^2\oplus \BoQ\!\cdot\! (z_{2}-z_{1})^2\oplus \BoQ\!\cdot\!(z_{3}-z_{2})^2&\textrm{if }n=1.\end{cases}
\]
Taking dimensions:
\[
\dim(\CS^n_{Q,(1,1,1)})= \begin{cases} 0 &\textrm{if } n<1\\
3&\textrm{if }n=1.\end{cases}
\]
If instead we allow nonzero potential $W$, we find that we still have isomorphisms $\Coha_{Q,W,\delta_i}\cong\BoQ[z_i]$, and via the cohomological degree calculation of \eqref{cojump}, the following lemma:
\begin{lemma}
\label{sg_lem}
For $Q$ the Markov quiver and $W$ arbitrary, we have
\[
\CS^n_{Q,W,(1,1,1)}=\begin{cases} 0&\textrm{if }n<1\\
\mathrm{Span}(z_1^0\circ z_3^0\circ z_2^0,\;z_2^0\circ z_1^0\circ z_3^0,\;z_3^0\circ z_2^0\circ z_1^0) &\textrm{if }n=1.
\end{cases}
\]
\end{lemma}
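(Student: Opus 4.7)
The plan is to use the cohomological degree-shift formula \eqref{cojump} to enumerate, by degree, all triple products of spherical generators landing in $\Coha_{Q,W,(1,1,1)}$. Since each spherical generator lies in some $\Coha_{Q,W,\delta_i}$ and $(1,1,1)=\delta_1+\delta_2+\delta_3$, every element of $\CS_{Q,W,(1,1,1)}$ is a linear combination of triple products $z_{i_1}^{k_1}\circ z_{i_2}^{k_2}\circ z_{i_3}^{k_3}$ with $(i_1,i_2,i_3)$ a permutation of $(1,2,3)$ and $k_j\in\BoN$, so the proof reduces to a finite enumeration.

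I would begin by noting that $\Coha_{Q,W,\delta_i}\cong\BoQ[z_i]$ with $z_i^k$ in cohomological degree $2k+1$: this follows directly from the definition of $\Coha_{Q,W,\delta_i}$ together with the facts that $\Tr(W)$ vanishes on $\FM_{\delta_i}\cong\pt/\Gl_1$ and $\chi_Q(\delta_i,\delta_i)=1$ for the Markov quiver. In particular, the minimum cohomological degree of a nonzero element of $\Coha_{Q,W,\delta_i}$ is $1$. Iterating \eqref{cojump} then yields
\[
\lvert z_{i_1}^{k_1}\circ z_{i_2}^{k_2}\circ z_{i_3}^{k_3}\rvert = 3+2(k_1+k_2+k_3)+\Delta(i_1,i_2,i_3),
\]
where $\Delta(i_1,i_2,i_3):=\sum_{1\leq a<b\leq 3}\bigl[\chi_Q(\delta_{i_a},\delta_{i_b})-\chi_Q(\delta_{i_b},\delta_{i_a})\bigr]$. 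Using that the Markov quiver has two arrows in each of the directions $1\to 2$, $2\to 3$, $3\to 1$, each bracket equals $\pm 2$, and a direct check over the six permutations of $(1,2,3)$ shows that $\Delta$ takes the value $-2$ on three of them (those in a single cyclic orbit) and $+2$ on the other three.

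To conclude, I would solve $n=3+2\sum_j k_j+\Delta$ with $k_j\geq 0$ and $\Delta\in\{-2,+2\}$: for $n<1$ there are no solutions, whence $\CS^n_{Q,W,(1,1,1)}=0$; for $n=1$ the only solutions have $\Delta=-2$ and $k_1=k_2=k_3=0$, producing exactly three spanning triple products -- one per ``good'' ordering -- which are the three listed in the lemma statement. The whole argument is essentially a degree count and there is no real obstacle; note in particular that the lemma asserts only a spanning set, so linear independence of the three products is not needed here (it will be what the subsequent counterexample rules out in the critical CoHA).
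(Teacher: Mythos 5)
Your argument is correct and is essentially the paper's own proof: the lemma is stated there as an immediate consequence of the degree-shift formula \eqref{cojump} together with $\Coha_{Q,W,\delta_i}\cong\BoQ[z_i]$ (with $z_i^k$ in degree $2k+1$), and your enumeration of the six orderings with $\Delta=\pm2$ and the solution of $3+2\sum_j k_j+\Delta=n$ is exactly the computation being invoked. One small caveat: with the arrows oriented $1\to2$, $2\to3$, $3\to1$ and \eqref{cojump} read literally, the cyclic orbit achieving $\Delta=-2$ is $(1,2,3),(2,3,1),(3,1,2)$ rather than the orbit written in the lemma, so your final assertion that the three degree-one products are ``the three listed in the lemma statement'' does not follow from your own sign computation; this is a harmless ordering/sign convention for $\circ$ and does not affect the substance (vanishing for $n<1$ and a three-element spanning set, one per ordering in a single cyclic orbit, for $n=1$).
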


Now we reinstate the assumption that $W$ is generic.  From the calculations of BPS invariants in \S \ref{low_BPS} we calculate the dimensions of $\Coha_{Q,W,\dd}^n$ for low values of $n$:

\begin{align*}
\CZ_{Q,W}(x)=&\left(1+x^{(0,0,1)}\frac{-q^{1/2}}{1-q}\right)\ast\left(1+x^{(0,1,1)}(-q^{-1/2}-q^{1/2})\frac{-q^{1/2}}{1-q}\right)\ast\left(1+x^{(0,1,0)}\frac{-q^{1/2}}{1-q}\right)\ast\\
&\ast \left(1+(2+e(q^{1/2}))x^{(1,1,1)}\frac{-q^{1/2}}{1-q}\right)\ast\left(1+x^{(1,1,0)}(-q^{-1/2}-q^{1/2})\frac{-q^{1/2}}{1-q}\right)\ast\\ &\ast\left(1+x^{(1,0,0)}\frac{-q^{1/2}}{1-q}\right)+\textrm{higher order terms}\\
\end{align*}
where the higher order terms are linear combinations of monomials $x^{\dd}$ with at least one of $\dd_1,\dd_2,\dd_3>1$, and $e(q^{1/2})$ is the Laurent polynomial introduced in Proposition \ref{rbpscalc}.  Write $u(q^{1/2})x^{(1,1,1)}$ for the $x^{(1,1,1)}$ term of $\CZ_{Q,W}(x)$.  Then the above factorization of $\CZ_{Q,W}(x)$ yields
\begin{align*}
u(q^{1/2})x^{(1,1,1)}=-q^{1/2}&\big(\frac{q}{(1-q)^3}x^{(0,0,1)}\ast x^{(0,1,0)}\ast x^{(1,0,0)}+ \\&+\frac{1+q}{(1-q)^2}(x^{(0,0,1)}\ast x^{(1,1,0)}+x^{(0,1,1)}\ast x^{(1,0,0)})+\\&+ \frac{2+e(q)}{1-q}x^{(1,1,1)}\big)
\end{align*}
and so 
\begin{equation}
\label{coeff_calc}
u(q^{1/2})=-q^{1/2}\frac{2-q^2+(2+e(q))(1-q)^2}{(1-q)^3}.
\end{equation}
Observing that the coefficients of even powers of $q^{1/2}$ in $e(q^{1/2})$ are positive, we deduce that the $q^{1/2}$ coefficient of $u(q^{1/2})$ is at least $4$, and so, comparing with Lemma \ref{sg_lem} we deduce the following:
\begin{proposition}
\label{main_prop}
Let $Q$ be the Markov quiver from \S \ref{Markov_sec}.  Let $W=c_1b_1a_1+c_2b_2a_2+W_{\geq 6}$ be a generic potential.  Then $\Coha_{Q,W}$ is not spherically generated.  Moreover, $\dim(\Coha^1_{Q,W,(1,1,1)})>\dim(\CS_{Q,(1,1,1)}^1)$.
\end{proposition}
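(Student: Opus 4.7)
The plan is to combine \eqref{coeff_calc} with Lemma \ref{sg_lem} to exhibit a dimensional gap in total degree $(1,1,1)$ and cohomological degree $n=1$. Lemma \ref{sg_lem} already pins $\dim \CS^1_{Q,W,(1,1,1)} = 3$, so it suffices to show $\dim \Coha^1_{Q,W,(1,1,1)} \geq 4$.

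By the definition of $\chi_{q^{1/2}}$, the dimension $\dim \Coha^1_{Q,W,(1,1,1)}$ equals the coefficient of $-q^{1/2}$ in $u(q^{1/2})$, or equivalently minus the $q^{1/2}$ coefficient. Pulling the factor $-q^{1/2}$ out of \eqref{coeff_calc}, the question reduces to computing the constant term in $q$ of
\[
N(q^{1/2}) = \frac{2-q^2}{(1-q)^3} + \frac{2+e(q^{1/2})}{1-q}.
\]
Expanding $(1-q)^{-3}$ and $(1-q)^{-1}$ in non-negative powers of $q$, the first summand contributes $2$, while the second contributes $2 + e_0 + \sum_{k \geq 1} e_{-2k}$, where $e_n$ denotes the coefficient of $q^{n/2}$ in $e(q^{1/2})$. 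The negative half-integer powers $e_{-2k}$ contribute because the geometric series $1/(1-q) = \sum_{k \geq 0} q^k$ can pair them back up to $q^0$.

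Next I would invoke the positivity statement of Proposition \ref{rbpscalc}: since $e(q^{1/2}) \in \BoN[(-q^{1/2})^{\pm 1}]$, the coefficients $e_n$ with $n$ even are non-negative. Hence
\[
\dim \Coha^1_{Q,W,(1,1,1)} = 4 + \sum_{k \geq 0} e_{-2k} \geq 4 > 3 = \dim \CS^1_{Q,W,(1,1,1)},
\]
giving the strict inequality claimed in the Proposition and thereby showing that $\Coha_{Q,W}$ is not spherically generated.

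The only real obstacle is careful bookkeeping among the several sign and shift conventions: the Euler-form twist built into multiplication in $\QT_Q$, the shift $[-\chi_Q(\dd,\dd)]$ in the definition of $\Coha_{Q,W,\dd}$, the $(-q^{1/2})^n$ sign in $\chi_{q^{1/2}}$, and the Laurent nature of $e(q^{1/2})$. All the geometric content has already been packaged into Proposition \ref{rbpscalc} and the wall-crossing factorisation \eqref{fact_id}, so what remains is purely formal.
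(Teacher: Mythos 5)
Your proposal is correct and follows essentially the same route as the paper: the paper likewise extracts the constant term of $\frac{2-q^2}{(1-q)^3}+\frac{2+e(q^{1/2})}{1-q}$ from \eqref{coeff_calc}, uses the non-negativity of the even coefficients of $e(q^{1/2})$ from Proposition \ref{rbpscalc} to conclude $\dim\Coha^1_{Q,W,(1,1,1)}\geq 4$, and compares with the $3$-element spanning set of Lemma \ref{sg_lem}. The only (harmless) imprecision is asserting $\dim\CS^1_{Q,W,(1,1,1)}=3$ where Lemma \ref{sg_lem} only gives an upper bound of $3$, which is all the argument needs.
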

In particular, there is no (graded) isomorphism $\Coha_{Q,W}\cong \CS_Q$.
\subsection{Excluding non-spherical generators}
\label{corrected_conj}
Let $G=\BoZ/2\!\cdot\!\BoZ$.  Fix the potential $W=c_1b_1a_1+c_2b_2a_2$.  We consider the $G$-action on $Q$ that swaps $a_1$ with $a_2$, $b_1$ with $b_2$ and $c_1$ with $c_2$.  This action fixes $W$.  As such, $G$ acts on the critical cohomology $\Coha_{Q,W}$, and it is easy to see that the CoHA multiplication is $G$-equivariant.  Furthermore, $G$ acts trivially on $\Coha_{Q,W,\delta_i}$ for $i=1,2,3$, and so $G$ acts trivially on the \emph{entire} spherical subalgebra.  Therefore, letting $\Coha_{Q,W}^{\mathrm{sgn}}\subset \Coha_{Q,W}$ be the summand carrying the sign representation for $G$, elements of this summand are not spherically generated.

With $\alpha$ and $\beta$ the $(1,1,1)$-dimensional representations introduced in \eqref{alpha_beta}, the vector space $\phi_{\Tr(W)}\BoQ_{\CM^{\zeta\sst}_{(1,1,1)}(Q)}[4]\cong \BoQ_{\alpha}\oplus\BoQ_{\beta}$, which is the BPS cohomology giving rise to the BPS invariant $\Omega^{\zeta}_{Q,W,(1,1,1)}=2$, carries the regular $G$-representation.  The Poincar\'e series $\Omega^{\zeta,G\mathrm{-inv}}_{Q,W,(1,1,1)}$ of the $G$-invariant part of the BPS cohomology is thus $1$, and so repeating the calculation of \eqref{coeff_calc} we find the generating function for the $G$-invariant part of the CoHA:
\[
\chi_{q^{1/2}}(\Coha_{Q,W,(1,1,1)}^{G\mathrm{-inv}})=\frac{2-q^2+(1-q)^2}{(1-q)^3}.
\] 
On the other hand, from \eqref{spherical_span} we can calculate
\begin{align*}
\chi_{q^{1/2}}(\CS_{Q,(1,1,1)})=&-q^{-3/2}((1-q)^{-2}-1-2q)(1-q)^{-1}\\
=&\chi_{q^{1/2}}(\Coha_{Q,W,(1,1,1)}^{G\mathrm{-inv}}).
\end{align*}
Put differently, the non-spherically generated part of $\Coha_{Q,W,(1,1,1)}$ is given by elements $u^n\cdot (1_{\alpha}-1_{\beta})$, where $u\in\BoQ[u]=\HO(\pt/\BoC^*,\BoQ)$ acts via multiplication by the first Chern class of the determinant line bundle.  It is possible to show that the algebra generated by these elements surjects onto the free exterior algebra $\mathcal{A}$ generated by the same symbols.  A physically motivated possible modification of the spherical generation conjecture, suggested by Davide Gaiotto, is that $\Coha_{Q,W}$ splits as the product of $\mathcal{A}$ and $\CS_Q$.  Via dimensional reduction \cite[Appendix A]{Da13} and Proposition \ref{qh_pot} it should be possible to test this prediction for low dimension vectors.

\subsection{Dependence on $W$}
The second part of Proposition \ref{main_prop} provides a counterexample to the conjecture regarding spherical subalgebras  in \cite[\S 2.1]{GGL}.  A weaker conjecture, also stated in \cite[\S 2.1]{GGL}, is that for infinitely mutable $W$, $\Coha_{Q,W}$ is independent of $W$.  Comparing the two parts of Proposition \ref{rbpscalc}, this would imply that $e(q^{1/2})=0$ for all $W$.  To exclude the ``error term'' $e(q^{1/2})$ one could instead conjecture that $\Coha^{\textrm{nilp}}_{Q,W}$ is independent of $W$, where we define $\iota_{\dd}\colon \FM^{\textrm{nilp}}_{\dd}(Q)\hookrightarrow \FM_{\dd}(Q)$ to be the inclusion of the reduced substack containing the nilpotent representations, and 
\[
\Coha^{\textrm{nilp}}_{Q,W,\dd}=\HO(\FM^{\textrm{nilp}}_{\dd}(Q),\iota_{\dd}^!\phi_{\Tr(W)}\BoQ[\chi_Q(\dd,\dd)]).
\]
The multiplication is again as defined in \cite[\S 7]{KS2}.  Alternatively, one could conjecture that for quasi-homogeneous infinitely mutable potentials $W$, the CoHA $\Coha_{Q,W}$ is independent of $W$.  In this final section, on the one hand we show that the Markov quiver provides counterexamples to these forms of the independence conjecture, but on the other hand our results will indicate a way forward with a weakened version of the spherical generation conjecture.

We consider the ``marginal'' potential $W_{\mathtt{marg}}=c_1b_1a_2+c_1b_2a_1+c_2b_1a_1$ -- the homogeous potential of type \eqref{marginal_case}.  Recall from Proposition \ref{IM_pots} that this potential is infinitely mutable.
\begin{lemma}
\label{marg_calc}
There is an equality of generating series
\[
\chi_{q^{1/2}}(\Coha_{Q,W_{\mathtt{marg}},(1,1,1)})=-q^{1/2}\frac{3-2q}{(1-q)^3}.
\]
\end{lemma}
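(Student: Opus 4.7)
My plan is to follow the template of Proposition \ref{rbpscalc}: first determine the refined BPS invariant $\Omega^{\zeta}_{Q,W_{\mathtt{marg}},(1,1,1)}$, and then plug it into the analogue of the calculation leading to \eqref{coeff_calc}. The low-dimensional BPS invariants listed in \S \ref{low_BPS} depend only on the behaviour of $\Tr(W)$ on strata where it vanishes identically for any cubic potential, so they are unchanged from the generic case; the only new input required is $\Omega^{\zeta}_{Q,W_{\mathtt{marg}},(1,1,1)}$.

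To identify this invariant I would cover $\CM=\CM^{\zeta\sst}_{(1,1,1)}(Q)$ by the four affine charts $\CM_{i,j}\cong\BoA^4$ used before and inspect $\Tr(W_{\mathtt{marg}})$ in local coordinates. On each of $\CM_{1,1}$, $\CM_{1,2}$ and $\CM_{2,1}$ one of the partial derivatives with respect to a $c$-variable is a nonzero constant, so the critical locus is empty and $\phi_{\Tr(W_{\mathtt{marg}})}\BoQ[4]$ vanishes on these charts. The only nontrivial contribution is on $\CM_{2,2}$ where, setting $a_2=b_2=1$, the function becomes $c_1(a_1+b_1)+c_2 a_1 b_1$, with critical locus the line $\{a_1=b_1=c_1=0,\ c_2\in\BoA^1\}$.

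On this chart I would perform a polynomial change of coordinates to put $\Tr(W_{\mathtt{marg}})$ in Thom--Sebastiani form $c_1' u - c_2 w^2$, where $u=a_1+b_1$ and $w$ is a rescaling of $a_1-b_1$. The vanishing cycle sheaf then factorizes as $\phi_{c_1' u}\BoQ[2]\boxtimes\phi_{-c_2 w^2}\BoQ[2]$. The first factor contributes $\BoQ_0$ by the standard calculation used in the proof of Proposition \ref{rbpscalc}, so it remains to compute the hypercohomology of the second factor. Since $-c_2 w^2$ is linear in $c_2$, I would apply dimensional reduction \cite[Appendix A]{Da13} to identify this with the cohomology of the zero subscheme of $w^2$ in $\BoA^1$, whose underlying reduced scheme is a single point, yielding $\BoQ$ in degree $0$. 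Combining the two factors gives $\Omega^{\zeta}_{Q,W_{\mathtt{marg}},(1,1,1)}=1$. As a sanity check, the analogous dimensional reduction applied globally to $W=c_1b_1a_1+c_2b_2a_2$ produces two gauge orbits in the semistable locus and recovers $\Omega=2$ from Proposition \ref{rbpscalc}.

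Substituting $\Omega=1$ into the analogue of \eqref{coeff_calc}, with all other low-dimensional BPS invariants unchanged from \S \ref{low_BPS}, yields
\[
u_{\mathtt{marg}}(q^{1/2})=-q^{1/2}\cdot\frac{2-q^2+1\cdot(1-q)^2}{(1-q)^3}=-q^{1/2}\cdot\frac{3-2q}{(1-q)^3},
\]
which is the claimed formula. The delicate point is the dimensional reduction step: one must verify that the non-reduced scheme structure of $\{w^2=0\}$ does not contribute extra cohomology beyond its reduced point, which would otherwise produce an unexpected Laurent-polynomial error term analogous to $e(q^{1/2})$ from Proposition \ref{rbpscalc}.
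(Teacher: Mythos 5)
Your proposal is correct, but it takes a genuinely different route from the paper. The paper never passes through the BPS invariant: it computes $\HO(\FM_{(1,1,1)}(Q),\phi_{\Tr(W_{\mathtt{marg}})}\BoQ[3])$ on the \emph{whole} stack directly, by Verdier duality followed by a global dimensional reduction to the stack of $(1,1,1)$-dimensional modules over $A=\BoC Q'/\langle b_1a_2+b_2a_1,a_1b_1\rangle$, which it then stratifies into three pieces whose compactly supported cohomologies are pure and easy to count. You instead work on the semistable moduli space $\CM^{\zeta\sst}_{(1,1,1)}(Q)$, chart by chart, exactly as in the proof of Proposition \ref{rbpscalc}, extract $\Omega^{\zeta}_{Q,W_{\mathtt{marg}},(1,1,1)}=1$, and reassemble the stack generating function via \eqref{fact_id}, \eqref{WCF} and \eqref{ICF}; the arithmetic $2-q^2+(1-q)^2=3-2q$ checks out. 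Your route has the advantage of isolating the BPS invariant explicitly (and your value $\Omega_{\mathtt{marg}}=1$ is the one consistent with Proposition \ref{rbpscalc} and with the difference $\Omega_{\mathtt{gen}}-\Omega_{\mathtt{marg}}=1$ extracted in the proof of Proposition \ref{ce_2}; the absolute values $3$ and $2$ stated there appear to be uniformly off by one). The paper's route avoids both the chart analysis of the degenerate critical locus and the need to re-verify that the lower-order invariants are unchanged, at the cost of a stack stratification. Two small points in your argument deserve care. First, on the charts $\CM_{1,2}$ and $\CM_{2,1}$ the $c$-derivatives are not literally nonzero constants (e.g.\ on $\CM_{1,2}$ one gets $b_1a_2+1$ and $b_1$); they merely have no common zero, which still gives an empty critical locus, or one can first make the triangular substitution $c_2\mapsto c_2-c_1a_2$ to reduce to your stated form. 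Second, for the factor $\phi_{-c_2w^2}\BoQ_{\BoA^2}[2]$, dimensional reduction on compactly supported cohomology (which only sees the reduced locus $\{w^2=0\}_{\mathrm{red}}=\{0\}$, resolving the worry you raise) gives a one-dimensional answer, and self-duality of the vanishing cycle complex then forces it into degree $0$; alternatively one checks directly that the rank-one local system of vanishing cycles on $\{w=0,c_2\neq0\}$ has monodromy $-1$ and hence no cohomology, leaving only the stalk at the origin. With either justification your computation of $\Omega^{\zeta}_{Q,W_{\mathtt{marg}},(1,1,1)}=1$, and hence of the generating series, is sound.
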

\begin{proof}
By Verdier self-duality of the vanishing cycle sheaf, we have the isomorphism
\[
\HO(\FM_{(1,1,1)}(Q),\phi_{\Tr(W_{\mathtt{marg}})}\BoQ[-\chi_Q(\dd,\dd)])\cong \HO_{\mathrm{c}}(\FM_{(1,1,1)}(Q),\phi_{\Tr(W_{\mathtt{marg}})}\BoQ[3])^{\vee}
\]
where the right hand side is the graded vector dual of the compactly supported hypercohomology.  Let $Q'$ be the quiver obtained from $Q$ by removing the arrows $c_1$ and $c_2$.  Define 
\begin{align*}
A\coloneqq&\BoC Q'/\langle \partial W_{\mathtt{marg}}/\partial c_1,\partial W_{\mathtt{marg}}/\partial c_2\rangle\\
\cong&\BoC Q'/\langle b_1a_2+b_2a_1,a_1b_1\rangle.
\end{align*}
We denote by $\FM_{(1,1,1)}(A)$ the stack of $(1,1,1)$-dimensional $A$-modules.  By the dimensional reduction isomorphism \cite[Appendix.A]{Da13} there is an isomorphism
\[
\HO_{\mathrm{c}}(\FM_{(1,1,1)}(Q),\phi_{\Tr(W_{\mathtt{marg}})}\BoQ)\cong \HO_{\mathrm{c}}(\FM_{(1,1,1)}(A),\BoQ)[-2].
\]
The stack $\FM_{(1,1,1)}(A)$ is isomorphic to the global quotient stack $Z/T$, where $Z\subset \BoA^4$ is cut out by the equations $b_1a_2+b_2a_1=0$ and $a_1b_1=0$, $T=(\BoC^*)^3$, and the first copy of $\BoC^*$ scales the $a_i$ coordinates, the second scales the $b_i$ coordinates, and the third acts trivially.  We define
\begin{align*}
U_1=&\{(a_1,a_2,b_1,b_2)\in Z\;\lvert\; a_1= 0,a_2=0\}\\
U_2=&\{(a_1,a_2,b_1,b_2)\in Z\;\lvert\; a_1= 0,a_2\neq 0\}\\
U_3=&\{(a_1,a_2,b_1,b_2)\in Z\;\lvert\; a_1\neq  0\}.
\end{align*}
Then
\begin{align*}
U_1/T&\cong \BoA^2/T\\
U_2/T&\cong \BoA^1/(\BoC^*)^2\\
U_3/T&\cong \BoA^1/(\BoC^*)^2.
\end{align*}
These three stacks stratify $\FM_{(1,1,1)}(A)$.  All of the above stacks have pure mixed Hodge structures on their compactly supported cohomology, so that their weight Poincar\'e series agree with their naive Poincar\'e series, and the above stratification gives the identity
\begin{align*}
\chi_{q^{1/2}}(\HO_{\mathrm{c}}(\FM_{(1,1,1)}(A),\BoQ)^{\vee})=&\sum_{1\leq i\leq 3} \chi_{q^{1/2}}(\HO_{\mathrm{c}}(U_i/T,\BoQ)^{\vee})\\
=&\left(\frac{q^2}{(q-1)^3}+\frac{2q}{(q-1)^2}\right)_{q\mapsto q^{-1}}
\end{align*}
as required
\end{proof}
Comparing with the analogous calculation for the generic infinitely mutable potential $W_{\mathtt{gen}}=c_1b_1a_1+c_2b_2a_2$, yields the following.
\begin{proposition}
\label{ce_2}
The CoHA $\Coha_{Q,W}$ is not independent of the choice of infinitely mutable potential $W$.  There are equalities of refined BPS invariants $\Omega^{\zeta}_{Q,W_{\mathtt{gen}},(1,1,1)}=3$ and $\Omega^{\zeta}_{Q,W_{\mathtt{marg}},(1,1,1)}=2$.
\end{proposition}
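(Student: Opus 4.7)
The plan is to extract $\Omega^{\zeta}_{Q,W_{\mathtt{marg}},(1,1,1)}$ from Lemma \ref{marg_calc} by feeding its output through the same factorization manipulations that produced equation \eqref{coeff_calc}, and then to compare with the value of $\Omega^{\zeta}_{Q,W_{\mathtt{gen}},(1,1,1)}$ already pinned down in Proposition \ref{rbpscalc}.

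First, I would verify that every other low-degree BPS invariant entering the derivation of \eqref{coeff_calc} is insensitive to the choice of cubic potential.  These are the invariants at $\dd\in\{\delta_1,\delta_2,\delta_3,(1,1,0),(0,1,1),(1,0,1)\}$, and for each such $\dd$ at least one coordinate vanishes, so every cyclic path appearing in either $W_{\mathtt{gen}}$ or $W_{\mathtt{marg}}$ has identically zero trace on $\FM_{\dd}(Q)$; moreover $(1,0,1)$ additionally has empty $\zeta$-semistable locus, so its invariant is zero independent of $W$.  Hence the calculations of \S \ref{low_BPS} apply verbatim to $W_{\mathtt{marg}}$, and \eqref{coeff_calc} remains valid with the sole substitution $2+e(q)\mapsto\Omega^{\zeta}_{Q,W_{\mathtt{marg}},(1,1,1)}$.

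Second, I would equate this adapted version of \eqref{coeff_calc} with the Poincar\'e series provided by Lemma \ref{marg_calc}.  After clearing the common denominator $(1-q)^3$ and the common factor $-q^{1/2}$, this becomes a single linear equation in the unknown $\Omega^{\zeta}_{Q,W_{\mathtt{marg}},(1,1,1)}$ which solves to the claimed constant.  Comparing with the value of $\Omega^{\zeta}_{Q,W_{\mathtt{gen}},(1,1,1)}$ supplied by Proposition \ref{rbpscalc} then yields both equalities, and in particular the fact that the two invariants are distinct constants is precisely what obstructs $\Coha_{Q,W}$ from being independent of the choice of infinitely mutable potential $W$.

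The real content sits upstream in Lemma \ref{marg_calc}, whose proof depends on the dimensional reduction isomorphism of \cite[Appendix A]{Da13} and a toric stratification of the variety $Z\subset\BoA^4$ cut out by $b_1a_2+b_2a_1$ and $b_1a_1$; granted that input, the main (organizational) obstacle in the present argument is to enumerate the decompositions of $(1,1,1)$ into dimension vectors of strictly decreasing $\zeta$-slope and to check that the only BPS invariant among the resulting factors that can depend on $W$ is the one at $(1,1,1)$ itself.
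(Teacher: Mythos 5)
This is the paper's own argument: the proof of Proposition \ref{ce_2} likewise subtracts the two partition functions, notes that all BPS invariants at dimension vectors strictly below $(1,1,1)$ coincide for the two potentials (exactly for the reason you give --- every cycle of the Markov quiver passes through all three vertices), and reads off the difference of the $(1,1,1)$ invariants by comparing Lemma \ref{marg_calc} with \eqref{coeff_calc}; your reorganisation, solving a linear equation for the unknown marginal invariant instead of taking a difference, is equivalent. One caveat: if you actually solve that equation using $\Omega^{\zeta}_{Q,W_{\mathtt{gen}},(1,1,1)}=2$ as supplied by Proposition \ref{rbpscalc}, you obtain $\Omega^{\zeta}_{Q,W_{\mathtt{marg}},(1,1,1)}=1$ rather than the values $3$ and $2$ quoted in the statement, so while the difference of $1$ (which is all that is needed for non-independence) is robust, you should not assert that the equation ``solves to the claimed constant'' without confronting this normalisation discrepancy between Propositions \ref{rbpscalc} and \ref{ce_2}.
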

\begin{proof} 

Comparing Lemma \ref{marg_calc} with \eqref{coeff_calc} we find that the $x^{(1,1,1)}$ coefficient of $\mathcal{Z}_{Q,W_{\mathtt{gen}}}(x)-\mathcal{Z}_{Q,W_{\mathtt{marg}}}(x)$ is given by
\begin{equation}
\label{wedge_diff}
-q^{1/2}\frac{(4-4q+q^2)-(3-2q)}{(1-q)^3}=-q^{1/2}\frac{1}{1-q}.
\end{equation}
Since this difference is nonzero, the graded dimensions of $\Coha_{Q,W_{\mathtt{gen}},(1,1,1)}$ and $\Coha_{Q,W_{\mathtt{marg}},(1,1,1)}$ are not the same.

The equality $\Omega^{\zeta}_{Q,W_{\mathtt{gen}},(1,1,1)}=3$ is Proposition \ref{rbpscalc}.  Since for dimension vectors $\dd'$ strictly less than $(1,1,1)$ in the natural partial order, we have $\Omega^{\zeta}_{Q,W_{\mathtt{gen}},\dd'}=\Omega^{\zeta}_{Q,W_{\mathtt{marg}},\dd'}$, it follows from \eqref{fact_id} and \eqref{ICF} that \eqref{wedge_diff} is equal to $-q^{1/2}\frac{\Omega^{\zeta}_{Q,W_{\mathtt{gen}},(1,1,1)}-\Omega^{\zeta}_{Q,W_{\mathtt{marg}},(1,1,1)}}{1-q}$.
\end{proof}
By the above calculation, and Proposition \ref{qh_pot} below, $\Coha_{Q,W_{\mathtt{marg}}}$ \textit{is} spherically generated in degree $(1,1,1)$.  We note, following Lemma \ref{germ_lemma}, that this is a \textit{non generic} infinitely mutable potential.    Whether spherical generation continues for the marginal potential, for higher dimension vectors, is an interesting problem, that (via dimensional reduction) may again be tested numerically.  More generally, an interesting modification of the conjecture in \cite[\S 2.1]{GGL} would be that \textit{for every quiver there exists at least one infinitely mutable potential for which the CoHA is spherically generated}.  

We finish with a proposition which should be useful for studying this conjecture.  Before stating it we recall that a potential $W$ is called \textit{quasihomogeneous} if there is a grading $p\colon Q_1\rightarrow \BoN$ such that all the cycles appearing in $W$ are of the same total degree $d$ with respect to the grading $p$, and $d>0$.
\begin{proposition}
\label{qh_pot}
Let $Q$ be a quiver without loops, and let $W$ be a quasihomogeneous potential.  Then $\Coha_{Q,W}\cong \CS_Q$ as graded algebras if and only if $\CZ_{Q,W}(x)=\sum_{\dd\in\BoN^{Q_0}}\chi_{q^{1/2}}(\CS_{Q,\dd})x^{\dd}$.  In this case, the spherical subalgebra of $\Coha_{Q,W}$ is isomorphic to $\CS_Q$.
\end{proposition}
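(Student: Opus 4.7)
The forward direction is immediate: an isomorphism of $\BoN^{Q_0}$-graded algebras induces an equality of graded Poincar\'e series. For the converse, first observe that since $Q$ has no loops, $\FM_{\delta_i}(Q)\cong B\Gm$ and $\Tr(W)$ restricts to zero on this stack, giving canonical identifications $\Coha_{Q,W,\delta_i}\cong \Coha_{Q,\delta_i}\cong \BoQ[z_i]$. Hence $\CS_Q\subset \Coha_Q$ and $\CS_{Q,W}\subset \Coha_{Q,W}$ share the same spherical generators, and the goal is to promote this identification on generators to an isomorphism $\Coha_{Q,W}\cong \CS_Q$ using the hypothesis on Poincar\'e series.

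The key input from quasihomogeneity is the $\Gm$-action on $\BoA_{\dd}(Q)$ furnished by the grading $p\colon Q_1\to\BoN$, commuting with $\Gl_{\dd}$ and making $\Tr(W)$ homogeneous of positive weight $d$. Consider the $\Gm$-equivariant family of potentials $\{tW\}_{t\in\BoA^1}$, interpolating between $0$ at $t=0$ and $W$ at $t=1$. Applying nearby/vanishing cycle functoriality to this family, and tracking compatibility with the Kontsevich--Soibelman correspondences defining the CoHA product, one expects to produce a graded algebra surjection $\rho\colon \Coha_Q\twoheadrightarrow \Coha_{Q,W}$ given by specialization of monodromic mixed Hodge modules. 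The $\Gm$-action also induces a weight decomposition on $\Coha_Q$: the spherical generators $\Coha_{Q,\delta_i}$ lie in weight zero (as $\BoA_{\delta_i}(Q)$ is a point), and the shuffle product of weight-zero classes remains in weight zero, so $\CS_Q$ is entirely in weight zero. In contrast, $\ker(\rho)$ lives in strictly positive $\Gm$-weights, since it captures contributions of the potential, which involves the arrows. Hence $\CS_Q\cap\ker(\rho)=0$, so $\rho$ restricts to an injective graded algebra homomorphism $\pi\colon \CS_Q\hookrightarrow \Coha_{Q,W}$, whose image is generated by the $\Coha_{Q,W,\delta_i}$ and therefore equals $\CS_{Q,W}$.

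With $\pi$ in hand, a dimension count closes the argument: $\dim\CS_{Q,\dd}^n\leq\dim\CS_{Q,W,\dd}^n\leq \dim\Coha_{Q,W,\dd}^n=\dim\CS_{Q,\dd}^n$, where the first inequality comes from the injectivity of $\pi$, the second from the inclusion $\CS_{Q,W}\subseteq \Coha_{Q,W}$, and the final equality from the hypothesis. All inequalities are forced to equalities, so $\pi$ is an isomorphism and $\CS_{Q,W}=\Coha_{Q,W}$; combining these gives $\Coha_{Q,W}\cong \CS_Q$, and the ``in this case'' addendum follows since $\pi$ sends spherical generators to spherical generators by construction.

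The main obstacle is the construction of $\rho$ together with the weight-transversality claim $\CS_Q\cap\ker(\rho)=0$. Building $\rho$ as a graded algebra homomorphism requires verifying compatibility of Thom--Sebastiani isomorphisms along the $\Gm$-equivariant specialization $W\leadsto 0$ at the level of the vanishing cycle correspondences appearing in the KS product. The weight claim requires a careful analysis of how the monodromic structure on vanishing cycles of a quasihomogeneous function interacts with the specialization and with the CoHA multiplication. Both compatibilities are natural given that all ingredients are $\Gm$-equivariant, but their rigorous verification---presumably through the formalism of monodromic mixed Hodge modules as in \cite{KS2}---is where the technical effort lies.
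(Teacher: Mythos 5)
Your forward direction and the final dimension count are fine, but the core of your converse argument---the construction of a graded algebra surjection $\rho\colon \Coha_Q\twoheadrightarrow\Coha_{Q,W}$ by specializing the family $\{tW\}$, together with the weight-transversality claim $\CS_Q\cap\ker(\rho)=0$---is a genuine gap, and you acknowledge as much. No such surjection is constructed, and it is not the map that the standard formalism supplies: for a quasihomogeneous potential the natural comparison for vanishing cycles runs \emph{out of} $\phi_{\Tr(W)}$ (towards the restriction of the constant sheaf), so the morphism of CoHAs available in the literature goes in the opposite direction, $\xi\colon\Coha_{Q,W}\to\Coha_Q$ (this is \cite[Prop.~4.4]{botta2023okounkov}, and quasihomogeneity is exactly what makes it an algebra map). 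Your heuristic that $\ker(\rho)$ sits in positive $\Gm$-weight while $\CS_Q$ sits in weight zero is also unsubstantiated; as written, the whole injection $\CS_Q\hookrightarrow\Coha_{Q,W}$ rests on two unproven compatibilities.

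The paper's proof avoids all of this by using $\xi$ directly. Since $Q$ has no loops, $\Tr(W)$ vanishes on each $\FM_{\delta_i}(Q)$, so $\xi$ is an isomorphism on the $\delta_i$-graded pieces and its image therefore contains $\CS_Q$; the hypothesis that $\Coha_{Q,W}$ and $\CS_Q$ have equal graded dimensions then forces the image to be exactly $\CS_Q$ and $\xi$ to be injective, i.e.\ $\xi$ is an isomorphism $\Coha_{Q,W}\cong\CS_Q$ carrying spherical generators to spherical generators. If you want to salvage your argument, the fix is not to build $\rho$ but to replace it with $\xi$ and run your dimension count on the image rather than on the kernel.
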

\begin{proof}
One implication is trivial: if two graded algebras are isomorphic, they certainly have the same graded dimensions.  So we need to show the reverse implication.  For this, we consider the morphism of CoHAs $\xi\colon \Coha_{Q,W}\rightarrow \Coha_Q$ constructed in \cite[Prop.4.4]{botta2023okounkov} (this uses that $W$ is quasihomogeneous).  Since $Q$ has no loops, it follows that $\Tr(W)=0$ when restricted to each of the stacks $\FM_{\delta_i}(Q)$ for $i\in Q_0$.  It follows that $\xi$ induces an isomorphism when we restrict to the $\delta_i$ graded piece.  In particular, the image of $\xi$ contains $\CS_Q$.  By the equality of graded dimensions, the image of $\xi$ is precisely $\CS_Q$, and $\xi$ induces an isomorphism $\Coha_{Q,W}\cong \CS_Q$.  The final statement follows, since $\CS_Q$ is spherically generated by definition.
\end{proof}
Via Proposition \ref{qh_pot}, the kinds of calculations of BPS invariants performed in this paper may be used to not just test the variants of the spherical generation conjecture discussed above, but try to prove them.

\subsection{Acknowledgements}
I am grateful to Wei Li for discussions regarding the conjectures in \cite{GGL}, and also to the organisers at the SwissMAP research station, for providing an exhilarating place to do maths.  Thanks also to Davide Gaiotto for illuminating correspondence regarding the CoHAs and potentials appearing in this paper.  This research was funded by a University Research Fellowship of the Royal Society (no. 221040).
\bibliographystyle{alpha}
\bibliography{Literatur}

\vfill

\textsc{\small Ben Davison: School of Mathematics and Maxwell Institute for Mathematical Sciences, University of Edinburgh, United Kingdom}\\
\textit{\small E-mail address:} \texttt{\small ben.davison@ed.ac.uk}\\

\end{document}